\newcommand{\Red}[1]{{\color{red} #1}}
\newtheorem{theorem}{Theorem}[section]
\newtheorem{proposition}[theorem]{Proposition}
\newtheorem{definition}[theorem]{Definition}
\newtheorem{remark}[theorem]{Remark}
\newcommand{\ra}{\rangle}
\newcommand{\jap}[1]{\langle #1\rangle}
\newcommand{\jx}{\left< x\right>}
\newcommand\R{\mathbb{R}}
\newcommand{\la}{\langle}
 \newcommand{\pwb}{\psi_{w,b}}
\numberwithin{equation}{section}
\title   [] { A General Scattering theory for Nonlinear and Non-autonomous Schr\"odinger Type Equations- A Brief description   }
\author{Baoping Liu}
\address{Beijing International Center for Mathematical Research\\
 Peking University\\
 Beijing\\
  China}
\email{baoping@math.pku.edu.cn}
\author{Avy Soffer}
\address{Department of Mathematics\\
Rutgers University\\
110 Frelinghuysen Rd.\\
Piscataway, NJ, 08854, USA}
\email{soffer@math.rutgers.edu}
\thanks{2010 \textit{ Mathematics Subject Classification.}   35P25, 35Q55, 47A40 }
\thanks{
B. Liu is supported in part by the NSFC12071010 and NSFC11631002.
A.
Soffer is supported in part by NSF grant DMS-1600749 and by NSFC11671163
}
\begin{document}
\begin{abstract}
We give a short description of the proof of asymptotic-completeness for NLS-type equations, including time dependent potential terms,
with radial data in three dimensions. We also show how the method applies for the two-body Quantum Scattering case.
\end{abstract}

\maketitle

\section{Introduction}
\textit{Coherent structures}, which include \textit{solitons, breathers, kinks, black-holes, vortices...} are solutions to nonlinear PDEs that remain spatially localized for all times.
 They arise as an outcome of balancing between the  linear dispersion  and nonlinear  attractive effect, and seem to be a universal phenomenon in many physical systems such as fluids, plasma, string theory,  supergravity, etc.

Mathematically, these solutions  play a fundamental role in understanding the long time dynamics for general solutions.  In fact, it is conjectured that the generic asymptotic states are given by independently (freely) moving coherent structures and free radiation~\cite{Soffer}. This statement is called \textit{asymptotic completeness}, sometimes also goes by the name \textit{soliton resolution}~\cite{Tao-soliton},  is one of most challenging and exciting topics in dispersive equations~\cite{Tao04, Tao07, Tao08, DKM,DJKM, JiaLiuXu, Generic, KLLS, KLS}.


In~\cite{LiuSoffer}, we study the asymptotic completeness for NLS with rather general nonlinearity and also time dependent potentials.  By constructing new propagation observables and proving propagation estimates,  we were able to construct the free channel wave operators in the spherically symmetric case. This shows that in  the exterior region, solutions behave like free waves. We also   provide more information about the left-over part of the solution. To our best knowledge, previously the only attempt in this direction are the works of Tao~\cite{Tao04, Tao07, Tao08}, where in $3d$ he obtained similar decomposition with error going to $0$ in $\dot{H}^1$ norm,  and in high dimension (and including a trapping potential term) he showed existence of  global compact attractor in $H^1$.  Comparably, we work in $3d$ and our results imply the solutions asymptotically decompose into a free wave and left-over(localized) part in the strong sense in $L^2$. We also provide more detailed information on left-over part.

In this short paper, we present a brief description of our result  and   elaborate the method and ideas therein.
In particular we demonstrate the method by applying it, in a much simplified form, to prove AC (asymptotic-completeness) for two body quantum scattering.

\section{Problem and Results}
We consider the general class of Nonlinear Schr\"odinger type equations of the form:
\begin{equation}\left\{
\begin{aligned}i\partial_t \psi +\Delta \psi & = \mathcal{N}(\psi), \\
\psi(0,x)& =\psi_0\in H^1_{rad}(\R^3)\cap L^2_{rad}(\R^3, |x|dx),
\end{aligned}\right. \label{Main-eq}
\end{equation}
such that
\begin{equation}\sup_{t\in [0,\infty)} \|\psi\|_{H^1(\R^3)} <\infty. \label{global-bound}\end{equation}
The term $\mathcal{N}(\psi)$ represents  a combination of the following cases:
\begin{enumerate}
\item  Defocusing power type nonlinearity with a focusing time independent potential $\mathcal{N}(\psi) = -|\psi|^{p-1}\psi+V(|x|)\phi$, with $p\in (\frac73, 5)$, i.e. it is energy-subcritical and mass-supercritical.

\item The  focusing saturated  nonlinearities as ~\cite{RSS}
$\mathcal{N}(\psi)=- \frac{|\psi|^{m-1}\psi}{1+|\psi|^{m-n}}$, with $m> \frac73, 1<n<\frac73$. Notice that it is mass subcritical  for $|\psi|\gg 1$, and mass supercritical for $|\psi|\ll 1$.

\item The radially symmetric time dependent potential $\mathcal{N}(\psi)=V(t,|x|)\psi$, such that
$|V(t,x)|\leq C (1+|x|)^{-q}$ for all $t$.
\end{enumerate}
In fact, if we write $\mathcal{N}(\psi)= V(|x|, t, |\psi|)\psi$, we only need to require  the following decay assumption for $t\geq 1$,
\begin{equation} \left|F(\frac{|x|}{t^\alpha}\geq 1)V(|x|, t, |\psi|) \right|\lesssim t^{-\beta_0},\label{N-decay}\end{equation}
for some $\beta_0>1.$
Here $F(\lambda)$ is a smooth characteristic function of the interval $[1,+\infty)$, $\alpha \in (\frac13, 1)$ is a parameter chosen later.  Using the radial Sobolev embedding and global $H^1$ bound, we see that this decay assumption is verified for general nonlinearities, and potentials with a suitable choice of the parameters $p, q, m, n$.

Our main result is the following

\begin{theorem}
Let $\phi(t)$ be a global solution to equation (\ref{Main-eq}) satisfying (\ref{global-bound}), then we have the following asymptotic decomposition
\begin{equation}
\lim_{t\rightarrow +\infty}\|\psi(t)- e^{i\Delta t}\Omega_{F}\psi_0 - \psi_{w,b}(t)\|_{L^2(\R^3)}=0.
\end{equation}
Here $\Omega_F$ is the (bounded) nonlinear scattering wave operator, mapping the initial data to the asymptotic free wave,
 $\psi_{w,b}$ is the weakly localized part of the solution, with the following properties
\begin{enumerate}
\item It is weakly localized in the region $|x|\leq t^{\frac12}$,  in the following sense
\[(\pwb, |x|\pwb )\lesssim t^{\frac12}.\]
\item It is smooth, and for $k\geq 1$, \[\|(x\cdot\nabla_x)^k \pwb\| _{L^2_x}\lesssim 1.\] $k\leq K$, $K$ depends on the regularity of the potential term, if present.
\item If the solution is time periodic, then \[ \|x \pwb\|_{L^2_x} \lesssim 1.\]
\end{enumerate}
All the  estimates hold uniformly in time for $t\geq 0.$
\end{theorem}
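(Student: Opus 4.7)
The plan is a positive-commutator/propagation-observable argument adapted to the nonlinear and time-dependent setting. The guiding intuition from \eqref{N-decay} is that in the exterior region $\la x\ra\ge t^\alpha$ the effective ``interaction'' $\mathcal N$ has a time-integrable amplitude, so the dynamics there must be asymptotically free, while in the complementary interior is where the weakly localized remainder $\pwb$ must live. The parameter $\alpha\in(1/3,1)$ is tuned to balance these two regions and ultimately deliver the scale $t^{1/2}$ for the localized part.

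\textbf{Propagation observables and estimates.} I would introduce observables of the form
\[ B(t)\;=\;F\!\left(\xt\right)\, A\, F\!\left(\xt\right),\]
where $A$ is a conjugate operator --- the dilation generator $\tfrac12(x\cdot p+p\cdot x)$, an outgoing radial-velocity projector, or iterated versions thereof. Computing the Heisenberg derivative
\[ DB \;=\; \partial_t B + i[-\Delta, B] + i[\mathcal N(\psi), B],\]
the first two pieces produce (for judicious choices of $A$ and $F$) a positive semidefinite local term of minimal-velocity type, while $i[\mathcal N,B]$ is controlled pointwise in $t$ by $t^{-\beta_0}$ via \eqref{N-decay}, the uniform $H^1$ bound \eqref{global-bound}, and the radial Sobolev embedding for $L^\infty$ control of $\psi$. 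Integrating $\la\psi(t),DB\psi(t)\ra$ over $t\in[1,\infty)$ and using the uniform boundedness of $B$ yields propagation estimates of the schematic form
\[ \int_1^\infty \left\|\,G\!\left(\xt\right)\,(\text{outgoing velocity operator})\,\psi(t)\right\|_{L^2}^2 \frac{dt}{t}\;\lesssim\;1,\]
together with complementary minimal-velocity bounds for the interior; iterating with step locations $t^{\alpha_j}$ and different choices of $A$ builds up the full picture.

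\textbf{Free channel operator and properties of $\pwb$.} With these estimates in hand I define
\[ \Omega_F\psi_0\;=\;\lim_{t\to\infty} e^{-i\Delta t}\,F\!\left(\xt\right)\psi(t)\qquad\text{in }L^2,\]
whose existence I verify by Cook's method: differentiating in $t$ produces three contributions (from $\partial_t F$, the kinetic commutator, and $\mathcal N F\psi$), each integrable in $t$ by the propagation estimates together with \eqref{N-decay}. I then set $\pwb(t):=\psi(t)-e^{i\Delta t}\Omega_F\psi_0$; by construction $F(\xt)\pwb\to 0$ in $L^2$, and rerunning the propagation-observable scheme with $\pwb$ replacing $\psi$ upgrades this to the weighted bound $(\pwb,|x|\pwb)\lesssim t^{1/2}$. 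The smoothness estimate $\|(x\cdot\nabla_x)^k\pwb\|_{L^2}\lesssim 1$ is obtained by taking $A=(x\cdot\nabla_x)^k$ and using the regularity of $V$ to commute derivatives through. In the time-periodic case, any polynomial-in-$t$ bound on a periodic quantity must in fact be $O(1)$, promoting $(\pwb,|x|\pwb)\lesssim t^{1/2}$ to $\|x\pwb\|_{L^2}\lesssim 1$.

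\textbf{Main obstacle.} The hardest single step I expect is designing $B(t)$ so that $i[-\Delta,B]$ produces an honestly positive local term rather than merely a bounded error; this is the classical Mourre-type difficulty, and here it is aggravated because no spectral localization is available and $\mathcal N$ carries no conserved quantity beyond mass. Radial symmetry is essential for closing the scheme, both because it gives the embedding $\||x|\psi\|_{L^\infty}\lesssim\|\psi\|_{H^1}$ (converting the exterior decay $|x|^{-1}$ into pointwise decay of $\psi$ itself, which is precisely what makes the nonlinear commutator genuinely time-integrable) and because it reduces the dilation commutator to a one-dimensional calculation. A secondary obstacle is coupling the estimates at the different scales $t^{\alpha_j}$ so that their errors genuinely telescope in the final $L^2$ identity identifying $\pwb$.
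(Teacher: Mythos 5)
Your overall architecture matches the paper's: exterior cutoffs $F_1(\la x\ra/t^{\alpha})$, Heisenberg derivatives, radial Sobolev embedding plus the $H^1$ bound to make the nonlinear commutator integrable via \eqref{N-decay}, Cook's method for $\Omega_F\psi_0=\lim e^{-i\Delta t}F_1\psi(t)$, and $\pwb:=\psi-e^{i\Delta t}\Omega_F\psi_0$. But you leave as a black box exactly the step the paper identifies as the crux, and your proposed substitutes for two of the three listed properties of $\pwb$ would not close. First, the positivity problem: in the nonlinear/time-dependent case one cannot localize the energy away from zero with functions of $H$, so with $A$ the bare dilation generator the commutator $i[-\Delta,F_1AF_1]$ has no sign on the low radial-velocity region --- which is precisely where the WLS channel lives. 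The paper's mechanism is to sandwich a velocity cutoff between the $F_1$'s, i.e. observables $F_1F_2(\gamma\gtrless\pm\epsilon)F_1$ as in \eqref{F1F2} (using $D_HF_2=0$ in the radial case), and then a \emph{second microlocalization} $F_2(t^{\beta}\gamma\lessgtr1)$; the exponent $1/2$ in $(\pwb,|x|\pwb)\lesssim t^{1/2}$ is not obtained by ``rerunning the scheme with $\pwb$'' but comes from the specific balance $\beta=1-\alpha$, $\alpha>1/2$, together with the monotonicity/trichotomy of $\Gamma=\lim\langle F_1\gamma F_1\rangle$ ($\Gamma>0$: free wave; $\Gamma=0$: WLS). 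Without introducing the $\gamma$-localization and this threshold analysis, your propagation estimates do not produce the $t^{1/2}$ scale.

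Second, the regularity of $\pwb$ is not proved in the paper by taking $A=(x\cdot\nabla_x)^k$ and ``commuting derivatives through'': that route costs derivatives of $\psi$ that are not yet controlled and is circular. The paper first establishes localization of $A\psi$ along a sequence $t_n$ by an exterior Morawetz estimate with observables $F_1(|x|/M)\gamma F_1(|x|/M)$ summed over dyadic scales $M=2^kM_0$ up to $\sqrt{t}$, and then proves smoothness by a separate frequency-space bootstrap: Duhamel for $\nabla\psi$, Littlewood--Paley projection $P_M$, and a \emph{minimal velocity} bound showing $P_Me^{it\Delta}\nabla\psi(0)$ has exited the (already known) spatial support of the solution by time $t=M^{-1/2}$, yielding $\|P_M\nabla\psi(0)\|_{L^2}\lesssim M^{-1/2}$ and hence an $\alpha$-gain of regularity to be iterated. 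Finally, for the time-periodic claim, your observation that a periodic quantity with a polynomial bound is $O(1)$ applies to the first moment $(\pwb,|x|\pwb)$, whereas the stated conclusion $\|x\pwb\|_{L^2}\lesssim1$ is a second-moment bound; the paper's route to full localization of (almost) periodic WLS goes through the additional dilation-operator estimates ($\langle|A|\rangle\lesssim1$ via $AP^{+}_{M,R}(A)$ with $\tanh$-projections), which your proposal does not supply.
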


\section{Phase space operators}Given a self-adjoint operator $A$ which might be time dependent, we denote
$$  \langle A \rangle_t := (A(t)\psi(t), \psi(t))= \int_{\R^3}( A(t)\psi(t) )\overline{ \psi(t)} dx $$ where $\psi$ is a solution to equation (\ref{Main-eq}). By direct computation we get
\begin{align}
\frac{d}{dt}\langle  A \rangle_t
= \langle  i[-\Delta+V , A] +\frac{\partial A}{\partial t} \rangle_t  - 2Im (A\psi, \mathcal{N}(\psi))\label{DtA}
\end{align}
For simplicity, we denote
$$D_HA=i[-\Delta+V, A] +\frac{\partial A}{\partial t} $$
and this operator is called the \textit{Heisenberg derivative}. Notice that if $\psi$ satisfies the free Schr\"{o}dinger equation, then
$\frac{d}{dt}\langle A\rangle = \langle D_H A\rangle.$


For any interval $\Omega\subset \R$, define a smooth function
\begin{equation*}F(\lambda \in \Omega) =\left\{ \begin{aligned} 1, &\hspace{0.5cm}  \mbox{ if } \lambda \in \Omega,   dist(\lambda, \partial \Omega)\geq \epsilon,\\
0,  &  \hspace{0.5cm}\mbox{ if } \lambda \not\in \Omega.
\end{aligned}
\right.
\end{equation*}
We will also require $F(\lambda\in \Omega)$ to be monotonic if $\Omega=[a,+\infty)$ or $(-\infty, a]$.  

Consider the following class of functions,\[
\mathcal{B}_n:= \{f\in C_b^\infty(\R)\left| \int_{\R}|\hat{f}(s)| |s|^{k} ds <\infty, \hspace{0.3cm} \mbox{ for } 1\leq k\leq n\}\right.\]
where $C_b^\infty$ represents class of smooth and bounded functions.  Typical examples in $\mathcal{B}_n$ are the smooth characteristic functions for intervals in $\R$.   For $f\in \mathcal{B}_n$,  and self-adjoint operator $A$
we can define the operator $f(A)$ using spectral calculus. For the convenience of commutator estimate, we also use the representation
\begin{equation}f(A)=\int_{\R}\hat{f}(s)e^{iAs}ds.\label{rep-formula}\end{equation}

 \begin{proposition}~\label{BFA-lemma} Let $A,B$ be self-adjoint operators, and $B$ is bounded.  Let $f\in \mathcal{B}_n $, then we have the following commutator formulas
\begin{align}
[B, f(A)]= & i\int_{\R}   \hat{f}(s) \int_0^s  e^{i(s-u)A}[ B, A]e^{iuA} du ds, \label{BFA-commute} \\
[B, f(A)]= &\sum_{k=1}^{n-1} \frac{1}{k!}f^{(k)}(A)ad_A^{(k)}(B) +R_n, \label{BFA}\\
[B, f(A)]=& \sum_{k=1}^{n-1} \frac{1}{k!}(-1)^{k-1} ad_A^{(k)}(B)f^{(k)}(A) - R^*_n. \label{BFA-adjoint}
\end{align}
Here $ad_A^{(k)}(B)$ is the higher commutators  \[ad_A^{(k)}(B)=[ad_A^{(k-1)}(B), A] = [[B,A],A],\ldots, A],\]
and the remainder term can be estimated as
\[\|R_n\|\leq c_n \|ad_A^{(n)}(B)\| \int |\hat{f}(s)| |s|^{n}ds.\]
\end{proposition}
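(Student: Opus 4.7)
The plan is to reduce all three identities to the Fourier representation \eqref{rep-formula}: since $f\in\mathcal{B}_n$ we have $f(A)=\int_\R \hat f(s) e^{iAs}\,ds$, so linearity gives $[B,f(A)]=\int_\R \hat f(s)\,[B,e^{iAs}]\,ds$, and the task reduces to evaluating and then expanding the one-parameter commutator $[B,e^{iAs}]$.

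For \eqref{BFA-commute}, introduce the Heisenberg conjugate $G(s):=e^{-iAs}Be^{iAs}$. Since $B$ is bounded and $e^{iAs}$ is unitary, $G(\cdot)$ is a bounded-operator-valued $C^1$ function of $s$ satisfying $G'(s)=ie^{-iAs}[B,A]e^{iAs}$. Integrating from $0$ to $s$ and then multiplying on the left by $e^{iAs}$ yields
\[
[B,e^{iAs}] \;=\; e^{iAs}\bigl(G(s)-B\bigr) \;=\; i\int_0^s e^{i(s-u)A}[B,A]\,e^{iuA}\,du,
\]
and inserting this identity into the Fourier integral above produces \eqref{BFA-commute}.

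For \eqref{BFA}, I would Taylor-expand $G(s)$ around $s=0$ to order $n-1$. Iterating the formula for $G'(s)$ gives $G^{(k)}(s)=i^k e^{-iAs}\, ad_A^{(k)}(B)\, e^{iAs}$, and hence $G^{(k)}(0)=i^k ad_A^{(k)}(B)$; the integral form of Taylor's theorem then yields
\[
G(s)-B \;=\; \sum_{k=1}^{n-1}\frac{(is)^k}{k!}\,ad_A^{(k)}(B) \;+\; \tilde R_n(s), \qquad \|\tilde R_n(s)\|\le \frac{|s|^n}{n!}\|ad_A^{(n)}(B)\|.
\]
Multiplying by $e^{iAs}$, integrating against $\hat f(s)$, and using the identity $\int_\R \hat f(s)(is)^k e^{iAs}\,ds = f^{(k)}(A)$ (immediate from differentiating \eqref{rep-formula} under the integral sign) delivers \eqref{BFA} with $R_n=\int_\R \hat f(s)\, e^{iAs}\tilde R_n(s)\,ds$ and the stated norm bound. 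Formula \eqref{BFA-adjoint} is proved by the mirror-image argument with $H(s):=e^{iAs}Be^{-iAs}$, noting that $H^{(k)}(0)=(-i)^k ad_A^{(k)}(B)$ and that the corresponding reordering is $[B,e^{iAs}]=-\bigl(H(s)-B\bigr)e^{iAs}$; the sign $(-1)^{k-1}$ and the placement of $f^{(k)}(A)$ on the right emerge automatically, and the remainder bound transfers verbatim.

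The main technical obstacle, and the reason $\mathcal{B}_n$ is defined via the moment conditions $\int|\hat f(s)||s|^k\,ds<\infty$, is to guarantee absolute convergence of every integral so that Fubini applies when interchanging the Fourier integral with the Taylor remainder integral; this also produces the norm bound for $R_n$ with no extra work. The only implicit hypothesis is that the iterated commutators $ad_A^{(k)}(B)$ are themselves bounded operators for $1\le k\le n$, without which the expansion has no operator meaning. Beyond these points the proof is pure bookkeeping: tracking operator ordering around each $ad_A^{(k)}(B)$ and recognizing the spectral integrals as derivatives of $f(A)$.
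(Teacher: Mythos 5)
Your proof is correct and is essentially the argument the paper intends: the proposition is stated without proof, but the representation formula \eqref{rep-formula} is introduced immediately beforehand precisely so that $[B,f(A)]$ reduces to $[B,e^{iAs}]$, which is then handled by the Duhamel/Taylor expansion of $e^{-iAs}Be^{iAs}$ exactly as you do, yielding the remainder bound with $c_n=1/n!$. Your remark that the iterated commutators $ad_A^{(k)}(B)$ must be bounded for $k\le n$ is a correct identification of the implicit hypothesis under which the statement is meant to be read.
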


\section{Asymptotic Completeness of Two body Quantum Scattering}
We illustrate the method we developed by first using some of its aspects to prove AC for the standard two-body short range potential case.
 The problem have been well studied, with several different proofs. We refer to the review paper~\cite{HS-JMP} for more details.

 As a standard procedure, we reduce the two body problem to $-\Delta +V$ on $L^2(\R^N)$.
 Let $H=-\Delta+V(x)$ with $V$ decaying at infinity, and is regular in the sense that commutators with $A:=\frac12(x\cdot p+p\cdot x)$ the dilation generator, are bounded to order 2 :
\begin{align}\label{V}
&V\jx^{\sigma},\, \jx^{\sigma}(x\cdot\nabla) V, \, \jx^{\sigma}(x\cdot\nabla)^2 V \lesssim 1, \sigma>1.\\
&\mathfrak{Im} V=0.
\end{align}
Under these conditions, $H$ is a self adjoint operator, with domain equal to that of the Laplacian.


  Next we recall the statement of asymptotic completeness (AC): For all initial conditions in $L^2,$ the asymptotic solution is known to be one of an explicit form. In the case of two body QM, all solutions are asymptotic to a linear combination of a free wave (solution of the free wave equation) and a bound stable cluster, which is almost periodic in time.
  Moreover, the bound cluster is a linear combination of eigenfunctions of $H$, with time dependent phase.
  The convergence to the asymptotic solution is in $L^2$:

  \begin{align}\label{AC-linear}
 &\lim_{t\to \pm \infty} \|e^{-itH}\psi(0)-e^{+i\Delta t}\phi_{\pm}-P_b\psi_{\pm}(t)\|_{L^2}=0.\\
 &\psi_{\pm}(t)=\sum_j a_j^{\pm}e^{-iE_j t}\psi_j^{\pm}.\\
 &H\psi_j=E_j\psi_j.\\
 &\sum_j  |a_j|^2 \leq c<\infty.
 \end{align}
 Here stability of the bound cluster means that the $a_j$ are time independent.

 \begin{theorem}\label{AC1-proof}
 Let $H,V$ as above. Then for all $\psi \in L^2(\R^n),$  AC (\ref{AC-linear}) holds.
 \end{theorem}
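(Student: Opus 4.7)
The plan is to follow the phase-space propagation framework laid out above, specialized to the linear two-body setting: decompose
\[ \psi(t)=e^{-iHt}\psi_0 = F\bigl(\tfrac{\jx}{t^\alpha}\geq 1\bigr)\psi(t) + F\bigl(\tfrac{\jx}{t^\alpha}<1\bigr)\psi(t), \]
with the outer piece capturing the free wave and the inner piece asymptotically supported on the bound cluster. Fix $\alpha\in(1/3,1)$ once and for all.

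First I would establish the key propagation estimate. Take the observable $\Phi(t):=g(\jx/t^\alpha)$ with $g$ smooth, nondecreasing, supported near $[1,\infty)$. Since $\Phi$ is a function of $x,t$ only we have $[V,\Phi]=0$, so $D_H\Phi = i[-\Delta,\Phi]+\partial_t\Phi$. Expanding $i[-\Delta,\Phi]$ via Proposition~\ref{BFA-lemma} gives a principal piece morally of the form $\tfrac{1}{t^\alpha}(p\cdot\hat{x})\,g'(\jx/t^\alpha)+\mathrm{h.c.}$ with remainders controlled by $\|\mathrm{ad}_p^{(k)}\Phi\|$ (integrable in $t$ by the $t^{-\alpha}$ scaling), while $\partial_t\Phi = -\alpha t^{-1}(\jx/t^\alpha)\,g'(\jx/t^\alpha)$ is signed and localized in the transition strip. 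Combined with the Mourre positive-commutator bound $P_c\,i[H,A]\,P_c\gtrsim c\,P_c$ away from eigenvalues and thresholds (valid under \eqref{V} since $\mathrm{ad}_A^{(k)}V$ is bounded for $k\leq 2$), integration in $t$ should yield on the continuous subspace
\[ \int_1^\infty \bigl\|F\bigl(\tfrac{\jx}{t^\alpha}\in[1,2]\bigr)e^{-iHt}P_c\psi_0\bigr\|_{L^2}^2\,\frac{dt}{t}\lesssim \|\psi_0\|_{L^2}^2, \]
together with the pointwise-in-time local decay $\|\jx^{-1}e^{-iHt}P_c\psi_0\|_{L^2}\in L^2_t$.

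Second, construct the free-channel wave operator
\[ \Omega_F^*\psi_0 := \lim_{t\to\infty} e^{-i\Delta t}\,F\bigl(\tfrac{\jx}{t^\alpha}\geq 1\bigr)\,e^{-iHt}\psi_0 \]
by Cook's method. Differentiating produces three terms: (i) the interaction $e^{-i\Delta t}\bigl[V\cdot F(\tfrac{\jx}{t^\alpha}\geq 1)\bigr]e^{-iHt}\psi_0$, which is $L^1_t$-integrable since $V\,F(\tfrac{\jx}{t^\alpha}\geq 1)$ is pointwise $O(t^{-\alpha\sigma})$ with $\alpha\sigma>1$ for $\alpha$ chosen close enough to $1$; (ii) the commutator $e^{-i\Delta t}[-\Delta,F]\,e^{-iHt}\psi_0$; and (iii) $e^{-i\Delta t}\,\partial_t F\cdot e^{-iHt}\psi_0$. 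Terms (ii) and (iii) are supported in the transition strip $\jx\sim t^\alpha$, so Cauchy--Schwarz against the propagation estimate makes their norms $L^1_t$. This proves existence of $\Omega_F^*$ on $L^2$ and gives its boundedness.

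Third, identify the leftover $\psi_{\mathrm{loc}}(t):=\psi(t)-e^{i\Delta t}\Omega_F^*\psi_0$. For $\psi_0=P_b\psi_0$ the evolution $\sum_j a_j e^{-iE_j t}\psi_j$ is spatially localized by the exponential decay of eigenfunctions of $H$, so $F(\tfrac{\jx}{t^\alpha}\geq 1)e^{-iHt}P_b\psi_0\to 0$ in $L^2$, hence $\Omega_F^*P_b\psi_0=0$ and this component sits entirely in $\psi_{\mathrm{loc}}$. For $\psi_0=P_c\psi_0$, the pointwise local decay from step one forces $F(\tfrac{\jx}{t^\alpha}<1)e^{-iHt}P_c\psi_0\to 0$ in $L^2$, so $\Omega_F^*$ captures all of $P_c\psi_0$. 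Summing the two pieces gives \eqref{AC-linear}.

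The main obstacle is precisely the pointwise-in-time $L^2$ local decay of $P_c\psi(t)$, as opposed to the Cesàro-mean statement of RAGE. This is the standard output of Mourre's theory, and in the propagation-observable language of this paper the clean way to get it is to iterate with a combined observable of the form $G(t)=g(\jx/t^\alpha)+h(A/t)$ whose Heisenberg derivative is manifestly nonnegative modulo $L^1_t$-integrable errors; dovetailing this with the Mourre lower bound and verifying that the second-order bound on $\mathrm{ad}_A^{(2)}V$ in \eqref{V} is enough to control the commutator expansions of Proposition~\ref{BFA-lemma} is the one step that requires genuine technical care.
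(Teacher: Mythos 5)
Your skeleton matches the paper's up to the construction of the free-channel wave operator: both split off the point spectrum, restrict to a dense set of energy-localized vectors $F_0(\eta\leq H\leq c)\psi$, and run Cook's method on $e^{-i\Delta t}F_1(\jx/t^\alpha\geq 1)e^{-iHt}$, with the $F_1V$ term integrable by the decay of $V$ and the $F_1'$ boundary term controlled by a propagation estimate for the observable $\tfrac12(F_1\gamma+\gamma F_1)$. Where you invoke a Mourre bound $P_c\,i[H,A]\,P_c\gtrsim c\,P_c$, the paper instead extracts positivity directly in the exterior region: it symmetrizes to get $\sqrt{F_1'}(-\Delta_r)\sqrt{F_1'}$, replaces $H$ by $-\Delta$ on the support of $F_1$ modulo $\mathbb{O}(t^{-m\alpha})$ errors, and uses the energy cutoff $\eta\leq H\leq c$ to bound this below by $\tfrac12\eta F_0\tilde F_1F_0$. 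That difference is largely cosmetic (though the Mourre route forces you to worry about thresholds and embedded eigenvalues, which you do not address).

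The genuine gap is in your third step. From $\|\jx^{-1}e^{-iHt}P_c\psi_0\|_{L^2}\in L^2_t$ you cannot conclude $\|F(\jx/t^\alpha<1)e^{-iHt}P_c\psi_0\|_{L^2}\to 0$: the trivial estimate is $\|F(\jx<t^\alpha)\phi\|\leq t^\alpha\|\jx^{-1}\phi\|$, so you would need the pointwise rate $o(t^{-\alpha})$ with $\alpha$ close to $1$, and square-integrability in time yields no pointwise rate whatsoever (only decay along a subsequence). What you actually need is a minimal velocity estimate, a much stronger statement, and it is precisely the step you defer to ``standard Mourre theory'' without carrying it out. The paper avoids this need entirely by a different logical route: it sets $\psi_{wls}=(I-\Omega\Omega^*)\psi(0)$, notes that by construction $\|F_1(\jx/t^\alpha\geq1)e^{-iHt}\psi_{wls}\|\to0$, and then shows $\psi_{wls}=0$ on the continuous subspace via the dilation (virial) identity: $\partial_t^2\langle x^2\rangle\geq \eta\|\psi_{wls}\|^2$ minus terms whose time average vanishes, so two integrations give $\langle x^2\rangle\gtrsim t^2$, while the maximal velocity bound combined with weak localization forces $\langle x^2\rangle\leq Mt_n^2/n^4$ along a sequence --- a contradiction unless $\psi_{wls}=0$. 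This virial exclusion of weakly localized states is the second key idea of the proof (alongside the exterior propagation estimate), and it is absent from your argument; without it, or a genuinely proved minimal velocity estimate together with the domain-of-$x$ approximation issues the paper handles explicitly, the identification of the leftover with $P_b\psi$ does not close.
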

 \noindent{\bf Proof}:  The solution can be split to two parts, by projecting on the pure point spectral part of $H$ and its orthogonal complement.
 By Weyl's theorem, since the potential is relatively compact w.r.t. $H$, the essential spectrum is $[0,\infty].$
 We therefore are left with showing that the orthogonal complement of the bound states is scattering to a free wave.
 To this end, we note that due to the linearity of the problem, it is sufficient to prove this result for a dense set, in the $L^2$ sense, of the orthogonal complement of the bound states.
 We choose the dense set to be the range of $F_0( \eta\leq H \leq c)\psi, \psi \in L^2, \forall c,\eta >0.$
 To prove that vectors on the range of these operators converge to a free wave, we will prove that the corresponding channel wave operators exist:
 \begin{align}\label{Wave-O-linear}
 &s-\lim_{t \to \infty} e^{-i\Delta t}F_1e^{-iHt}F_0(\eta)\psi\\
 &w-\lim_{t \to \infty} e^{-i\Delta t}(I-F_1)e^{-iHt}F_0(\eta)\psi=0.\\
 &F_1\equiv F_1(\frac{|x|}{t^\alpha}\geq 1), \alpha \in (1/3, 1).\\
 &F_0(\eta)\equiv F_0( \eta\leq H \leq c) .
 \end{align}
 The function $F_1$ is a smooth projection on the argument domain. That is, it is equal to $1$ on the region of space $\frac{|x|}{t^\alpha}\geq 1.$

 To prove that the limit exists on support of $F_1$, we use Cook's method.
  We need to show that the derivative w.r.t. time is integrable in $t.$
 The derivative is
 \begin{align}
&\frac{\partial}{\partial t}e^{-i\Delta t}F_1e^{-iHt}F_0(\eta)\psi\\
= &e^{-i\Delta t}\big [t^{-\alpha}F'_1[2\gamma-\alpha |x|/t]+F_1 V\big]e^{-iHt}F_0(\eta)\psi+\mathbb{O}(t^{-2\alpha} F''_1)
\end{align}
The vector-field defining $\gamma$ is taken to be smooth and is equal to $x/|x|, |x|\geq 2.$
The term $F_1 V$ decays like $t^{-\sigma\alpha}.$ To cover the most general short range decay, we need to take $\alpha$ close to 1, which we can.
However, we take $\alpha > 1/3,$ since it is the minimum allowed for the argument to go through, in any dimension.

 Assume that the $F_1 V$ decays faster than $1/t$ and is therefore integrable.

Next, to control the $F'_1$ term, we need to prove a propagation estimate that implies the required integrability.
This propagation estimates is done with the following choice of propagation observable (PROB):
\begin{align}\label{PROB-1}
&\frac{\partial}{\partial t} \langle (1/2)[F_1\gamma+\gamma F_1]\rangle\\
=&\langle t^{-\alpha}[F'_1(2\gamma^2-(\alpha |x|/t)\gamma] + h.c.\rangle+\langle F_1L^2/r^3\rangle +  \mathbb{O}(t^{-1-\epsilon}).\label{dfgamma}
\end{align}
Here $L^2= |x\times p|^2$ represents the  angular momentum, which comes from the non-radial part of the solution.

To get the propagation estimate, we need to show that the RHS is a sum of a positive quantity and an integrable quantity(in time).
For this we use that:

\begin{proposition}\label{localization}
\begin{align}
\gamma^2F'_1 +F'_1\gamma^2= & 2\sqrt{F'_1}\gamma^2\sqrt{F'_1}+\mathbb{O}(t^{-2\alpha})=2\sqrt{F'_1}(-\Delta_r)\sqrt{F'_1}+\mathbb{O}(t^{-2\alpha}) \label{r2F}\\
(F'_1\alpha |x|/t)t^{-\alpha}\thickapprox & \alpha F'_1/t.\\
 \langle F'_1/t \gamma\rangle \lesssim & a\langle\sqrt {F'_1}/t^{\alpha} \gamma^2\sqrt {F'_1}\rangle+\langle F'_1/t^{1+\epsilon} \rangle\quad  a\ll 1.\label{Ft}\\
 \quad \quad F_1\tilde F_0(H\geq \eta)=&  F_1\tilde F_0(-\Delta\geq \eta)+\mathbb{O}(\eta^{-m-1}t^{-m\alpha}D^m),\quad D=|\partial|\label{H-Delta}\\
F_0(-\Delta\geq \eta)&[ \sqrt{F'_1}(-\Delta_r)\sqrt{F'_1}+F_1L^2/r^3]F_0(-\Delta\geq \eta)\geq \frac12 F_0 \tilde F_1 \eta F_0. \label{Positive}\\
\tilde {F_1}\thickapprox& F_1F'_1.
\end{align}

\end{proposition}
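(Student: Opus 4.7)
The six items in Proposition~\ref{localization} are the operator-calculus and spectral ingredients that convert the Heisenberg derivative in (\ref{dfgamma}) into a sum of a nonnegative quadratic form and an integrable remainder. My plan is: first dispatch the commutator/support items (1), (2), (3), (6), which are symbolic or Cauchy--Schwarz in nature; then prove (4) via the Fourier representation (\ref{rep-formula}) combined with a Duhamel expansion in $V$; and finally assemble these pieces to establish the positivity statement (5), which is the main obstacle.

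For (1) I use the IMS double-commutator identity
\begin{equation*}
F'_1\,\gamma^2 + \gamma^2\,F'_1 \;=\; 2\sqrt{F'_1}\,\gamma^2\,\sqrt{F'_1} \;+\; \big[\sqrt{F'_1},[\sqrt{F'_1},\gamma^2]\big];
\end{equation*}
each commutator of $\sqrt{F'_1}(\langle x\rangle/t^\alpha)$ with the momentum part of $\gamma$ produces a factor $t^{-\alpha}$, so the double commutator is $\mathbb{O}(t^{-2\alpha})$. The passage $\gamma^2 = -\Delta_r + \mathbb{O}(1/r^2)$ is the standard radial identity, and $1/r^2 = \mathbb{O}(t^{-2\alpha})$ on $\mathrm{supp}\,F'_1$. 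Item (2) is the pointwise observation that $F'_1(\lambda)$ is concentrated near $\lambda = 1$, so $(|x|/t)t^{-\alpha}F'_1 = (|x|/t^\alpha)\,t^{-1}\,F'_1 \approx t^{-1}F'_1$. Item (3) is an AM--GM inequality on the symmetric form $\langle F'_1\gamma\rangle/t = t^{-1}\langle \sqrt{F'_1}\gamma\sqrt{F'_1}\rangle + \mathbb{O}(t^{-1-2\alpha})$ combined with Cauchy--Schwarz $|(\sqrt{F'_1}\psi,\gamma\sqrt{F'_1}\psi)| \le (a t^{-\alpha})\|\gamma\sqrt{F'_1}\psi\|^2 + (t^\alpha/a)\|\sqrt{F'_1}\psi\|^2$; after division by $t$, using $\alpha < 1$, the last term becomes $\mathbb{O}(t^{-1-\epsilon})$. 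Item (6) amounts to choosing $\tilde F_1 \in C^\infty_c$ supported strictly inside $\{F_1 \equiv 1\}$ with $\tilde F_1 \asymp F_1 F'_1$, which is possible because by construction $\mathrm{supp}\,F'_1 \subset \{F_1 = 1\}$. For (4), I apply Proposition~\ref{BFA-lemma} to expand $\tilde F_0(H) - \tilde F_0(-\Delta)$ using (\ref{rep-formula}) together with the Duhamel identity $e^{isH} - e^{-is\Delta} = i\int_0^s e^{iuH} V e^{-i(s-u)\Delta}\,du$. Each factor of $F_1 V$ is bounded by $t^{-\alpha\sigma}$ on the support of $F_1$, and iterating the commutator expansion $m$ times pairs every factor of $D$ with an inverse resolvent giving $\eta^{-1}$ (since $\tilde F_0$ is supported in $\{H \ge \eta\}$), producing the stated $\eta^{-m-1}t^{-m\alpha}D^m$ remainder.

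The main obstacle is the positivity statement (5). My plan is to use the radial decomposition $-\Delta = -\Delta_r + L^2/r^2$ together with $[F'_1, L^2/r^2] = 0$ to write
\begin{equation*}
\sqrt{F'_1}\,(-\Delta)\,\sqrt{F'_1} \;=\; \sqrt{F'_1}\,(-\Delta_r)\,\sqrt{F'_1} \;+\; F'_1\,L^2/r^2 \;+\; \mathbb{O}(t^{-2\alpha}),
\end{equation*}
so the bracketed operator in (5) differs from $\sqrt{F'_1}(-\Delta)\sqrt{F'_1}$ only by the cross term $L^2(F_1/r^3 - F'_1/r^2)$ plus an $\mathbb{O}(t^{-2\alpha})$ remainder. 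Using (4) to exchange $F_0(-\Delta \ge \eta)$ for $F_0(H \ge \eta)$ on the support of $F_1$, the spectral lower bound $F_0\,H\,F_0 \ge \eta\,F_0^2$ yields $F_0\,\sqrt{F'_1}(-\Delta)\sqrt{F'_1}\,F_0 \ge \eta\,F_0\,F'_1\,F_0$ up to errors controlled by (4) and (1). The delicate point is that the cross term need not be sign-definite; here I would use (6) to shrink $\tilde F_1$ strictly inside $\mathrm{supp}\,F'_1$ so that the negative part of $L^2(F_1/r^3 - F'_1/r^2)$ is bounded in absolute value by $\mathbb{O}(t^{-\alpha})\,L^2/r^2$, and then absorb $L^2/r^2 \le -\Delta \le c$ using the upper cutoff built into $F_0(\eta \le H \le c)$. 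The factor $\tfrac12$ in (5) exists precisely to absorb all such cross errors; the main difficulty is keeping the bookkeeping consistent as $F_0$ is commuted through $\sqrt{F'_1}$, which is precisely where (4) is needed.
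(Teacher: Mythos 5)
Your proposal matches the paper's own (sketch) proof in approach: item (1) by the IMS symmetrization identity $F^2G+GF^2=2FGF+[F,[F,G]]$ with each commutator of $\sqrt{F'_1}$ contributing a factor $t^{-\alpha}$, item (3) by Cauchy--Schwarz, and item (4) by the Fourier representation (\ref{rep-formula}) combined with a Duhamel expansion in $V$, with the weight $\jx^{-m}$ on $\mathrm{supp}\,F_1$ supplying the $t^{-m\alpha}D^m$ remainder. The only cosmetic difference is your attribution of the $\eta^{-m-1}$ factor to spectral/resolvent localization, whereas the paper obtains it from $\|u^{m+1}\widehat{\tilde F_0}(u)\|_{L^1}\lesssim \eta^{-m-1}$ (the smoothness scale of the energy cutoff); your treatments of (2), (5), (6), which the paper leaves unproved, are consistent with its framework.
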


\begin{proof}

The first statement follows from symmetrization, by commuting the square root of $F'_1.$
Since
$$
F^2G+GF^2=2FGF+[F,[F,G]],
$$
and each commutator with $F$ gives a factor of $t^{-\alpha}, \alpha>1/3,$ the result follows.
This also explains why $\alpha >1/3$ is the border-line.
The third inequality is a consequence of Cauchy-Schwarz inequality.
To prove (\ref{H-Delta}), we use the following representation:
\begin{align}\label{H-H_0}
F_1\tilde F_0(H\geq \eta)= & F_1\tilde F_0(H\geq \eta)- F_1\tilde F_0(-\Delta\geq \eta)+F_1\tilde F_0(-\Delta\geq \eta)\\
=&
c\int_{-\infty}^{\infty} \mathfrak{F}{\tilde F_0}(u) F_1[e^{iHu}-e^{-i\Delta u}]du+F_1\tilde F_0(-\Delta\geq \eta)\\
F_1[e^{iHu}-e^{-i\Delta u}]= & F_1e^{-i\Delta u}\int_{0}^{u}e^{i\Delta s}iVe^{iHs}ds\\
= &F_1\jx^{-m} \mathbb{O}(D^m)e^{-i\Delta u}\int_{0}^{u}s^m e^{i\Delta s}i\jx^m Ve^{iHs}ds\\ =& F_1\mathbb{O}(t^{-m\alpha}D^m)u^{m+1} \|\jx ^mV(x)\|_{\infty}.
\end{align}
The result now follows by using that $\|u^{m+1} \mathfrak{F}{\tilde F_0}(u)\|_{L^1}\lesssim 1/\eta^{m+1}.$
\end{proof}
From (\ref{dfgamma})(\ref{r2F})(\ref{Ft}), we get that
\begin{equation}\label{PRES1}
\int_{1}^{T}\frac{1}{t^{\alpha}}\|\sqrt{-\Delta_r} \tilde{F_1}\psi(t)\|^2 dt \leq C_{\eta}\|\psi(0)\|^2_{H^{1/2}}.
\end{equation}
From Propposition~\ref{localization} and (\ref{H-H_0}),(\ref{PRES1}) we get
\begin{equation}\label{PRES2}
\int_{1}^{T}\frac{1}{t^\alpha}\|\eta \tilde{F_1}\psi(t)\|^2 dt \leq \sup_t\|\psi(t)\|^2_{H^{1/2}}+\sup_t \jap{D^m}\eta^{-m+1}
\end{equation}
We now take $\psi\in F_0(H\ge \eta)\bar{F_0}(H\leq c)L^2,$ and then $\jap{D^m}<\infty,$
$c$ arbitrary. Since
$$
D^2\approx -\Delta=H-V, \quad (H-V)\bar{F_0}(H\leq c)\leq (c+\mathbb{O}(1))\bar{F_0}(H\leq c).
$$
We also used that $V$ is $H$ bounded. 

(\ref{PRES2}) implies that the channel wave operator exists:
\begin{equation}\label{channel 1}
s-\lim e^{-i\Delta t}F_1(\frac{|x|}{t^{\alpha}}\geq1)e^{-iHt}g(H\geq \eta)\equiv \Omega ^*
\quad \quad \forall \eta \text{ positive}.
\end{equation}

$\Omega\Omega^*$ projects on initial states that are asymptotically free waves.
Therefore, any $\psi(0)\in \mathbb{H}_{ess}(H)$ decomposes as:
\begin{align}
&\psi(0)=\psi_{wls}+\psi_{free}\equiv \psi_{wls}+\Omega\Omega^* \psi(0),\\
&e^{-iHt}\psi_{wls}\simeq \bar{F_1}(\frac{|x|}{t^{\alpha}}\leq 1)e^{-iHt}\psi_{wls}.
\end{align}
The second one means $\| \bar{F_1}(\frac{|x|}{t^{\alpha}}\geq 1)e^{-iHt}\psi_{wls}\|_{L^2}\rightarrow 0$ as $t\rightarrow +\infty$.
It is now left to show that any $\psi_{wls}=0.$

 Formally, this follows from the Dilation Identity:
 \begin{align}\label{Dilation}
 \partial^2_t \langle x^2\rangle = & \partial^2_t (e^{-iHt}\psi_{wls},x^2 e^{-iHt}\psi_{wls})\\
 =& (e^{-iHt}\psi_{wls},(8(-\Delta)-4x\cdot\nabla V) e^{-iHt}\psi_{wls})\\
\geq& (e^{-iHt}\psi_{wls}, e^{-iHt}\psi_{wls})\eta-(e^{-iHt}\psi_{wls},-8V-4x\cdot\nabla V e^{-iHt}\psi_{wls}).
 \end{align}
 On the continuous spectral part of $\psi_{wls}$, the time average of the last term goes to zero with time.
 Upon two integrations over time, we therefore conclude that the LHS, $\langle x^2\rangle$ grows like $t^2.$
 This means that part of the solution moves in $L^2$ sense to a distance $t$.
 This is not possible for the WLS solution.
 As can be seen from the computation, we need to know that the solution is in the domain of $x.$
 Therefore, we need to approximate such states with a localized state.
 One can check that any vector in the Hilbert space, in the range of the operator $g(H\sim \eta)$,
 can be approximated by $g(H\sim \eta)\jx^{-2}\psi_{\epsilon}.$
 For this, one only needs to check that the commutator $[x,g(H)]$ is bounded for smooth $g.$
 Then we use that:
\begin{equation}
 \|F_1(\frac{|x|}{t^{\alpha}})e^{-iHt}\psi_{\epsilon}\|\leq  \|F_1(\frac{|x|}{t^{\alpha}})e^{-iHt}[\psi_{\epsilon}-\psi_{wls}]\|+
 \|F_1  e^{-iHt}\psi_{wls}\|\leq 2\epsilon,
 \end{equation}
 for all $t>T(\epsilon).$
 It remains to check that if $\langle x^2\rangle\geq ct^2,$ then we get a contradiction.
 Let $\psi_w(t)$ be a weakly localized solution. Then we have that
 \begin{equation}
 \|F_1(|x|/t^{\alpha})\psi_w(t)\|_{L^2} \rightarrow 0,
 \end{equation}
 as $t$ goes to infinity.
 Therefore,
 \begin{align}
 &\exists t_n,\quad
 \|F_1(|x|/t_n^{\alpha})\psi_w(t_n)\|_{L^2}\leq 1/n^2.\\
 &\|F_1(|x|/t^{\alpha})\psi_w(t)\|_{L^2} \leq \| F(\frac{|x|}{t}\leq M) F_1(|x|/t^{\alpha})\psi_w(t)\|_{L^2} +c/t.\\
 & M\geqq 1.\quad \|x F F_1(|x|/t_n^{\alpha})\psi_w(t_n)\|_{L^2}\leq Mt_n \, \|F_1(|x|/t^{\alpha})\psi_w(t)\|_{L^2}\leq Mt_n/n^2.
 \end{align}
 \begin{equation} 
 \langle \psi_w(t_n), x^2 \psi_w(t_n)\rangle \leq Mt_n^2/n^4.
 \end{equation}
 We also used the maximal velocity estimates, to insert $F(\frac{|x|}{t}\leq M).$
 On the complement there is decay, if the energy of the state is small compared with $M.$
 This follows from the following PROB:
 \begin{align}
 &D_H (x^2-M^2t^2)F_1(\frac{|x|}{t}\geq 2M)=\\
 &(4A-2M^2t)F_1 +(x^2-M^2t^2)F'_1(t^{-1}\gamma-t^{-2}|x|)+\text{h.c.}+\text{higher order}\\
 &\lesssim (4|x|E-2M^2t)F_1+(4M^2t^2-M^2t^2)F't^{-1}(\gamma-M)+\text{h.c.}+\text{higher order}\leq 0 +\text{h.c.}.
 \end{align}
 The resulting propagation estimate is the desired bound used above.
 Here $E$ is the maximal energy of the initial stat: $\psi(0)=F(H\leq E)\psi(0).$
 
 We therefore conclude that the continuous spectral part of $H$ does not have WLS (weakly localized states),
 which implies that the channel wave operator has the full continuous spectrum in its range.
 This implies that the spectrum is absolutely continuous.

 It is still possible that there are positive eigenvalues, but by a known compactness argument, it is a discrete set.
 It is possible to use arguments as above to exclude them, but this is beyond the scope of this article.
 \section{Nonlinear and Time-Dependent Potentials}
 Following the ideas of the above example of two body Quantum Hamiltonians, we explain the method to treat NLS type equations and time dependent potentials.

 First, we need to have localized perturbation terms. This is achieved by assuming that the initial data is radial, in $H^1(\R^3)$
 or higher dimensions. Then we can use the Radial Sobolev embedding theorems ($n=3$):
 \begin{equation}
 r|u(r)|\lesssim \|u(\cdot)\|_{H^1},\quad \quad r^{1/2}|u(r)|\lesssim \|u(\cdot)\|_{\dot{H}^1} \text{ for }\quad r\geq 1.
 \end{equation}
 We then choose the nonlinearities/potentials such that the $H^1$ norm is uniformly bounded. Furthermore, we require the nonlinearity
 to vanish to a sufficient order in $u$ for $u$ near zero. Consequently, recall $\mathcal{N}(\psi)=V(|x|,t, |\psi|)$
\begin{equation}
  F_1V=\mathbb{O}(t^{-1-\epsilon})\|\psi\|_{H^1}^m.
\end{equation}

 Next, we need to deal with the fundamental difference from the time independent case, which is the inability to localize the energy away from zero, as we did with functions of $H.$

 Indeed, the dynamics can squeeze the solution to zero frequency as time goes on, producing a new channel of asymptotic states.
 We refer to these kind of states, if they exist, as \emph{weakly localized states}. As an example one should consider self similar solutions.
 The way to go around this problem, is to localize $\gamma$ on the support of $F'_1$ so as to insure getting positive commutators.
 That means we will use propagation estimates derived from PROB of the form:
 \begin{align}\label{F1F2}
& B_1 \equiv F_1(|x|/t^{\alpha})F_2(\gamma)F_1(|x|/t^{\alpha})\\
&F_2=F_2(y\geq \epsilon),F_2(y\leq -\epsilon)\\
&B_2 \equiv (|x|/t^{\alpha})F_1(|x|/t^{\alpha})F_2(\gamma)+h.c.
\end{align}
\Red{}
The reason we get good estimates from $B_1,B_2$ is the following:
\begin{align}
D_H (F_1F_2F_1)= & t^{-\alpha}\big[F'_1[2\gamma-\alpha|x|/t]F_2F_1+F_1[2\gamma-\alpha|x|/t]F_2F'_1\big]+\mathbb{O}(t^{-1-\epsilon})\\
\geq& t^{-\alpha}\sqrt{F'_1}\epsilon F_2\sqrt{F'_1}+\mathbb{O}(L^1(dt))\\
[F_1,\partial_x]= & \mathbb{O}(t^{-\alpha}),
\end{align}
Here we used that
$ D_H F_2=0$ in the radial case.
The resulting PRES imply the existence of channel wave operators, as before, and in particular imply that the states propagating on the range of $B_1, B_2$ become free waves ( and zero when $F_2=F_2(y\leq -\epsilon)$ ).

In the TD/NL (Time-Dependent, Nonlinear) potential case, it is not possible to exclude arbitrary small frequency from scattering.
The interaction terms can create another channel of asymptotic behavior, by focusing at frequency zero.
To this end, we need a geometric decomposition to separate the free waves from the rest, which are only \emph{weakly localized states} (WLS) in general.
For this we use the following :
\begin{theorem}
The limit
$$\lim_{t \to \infty} \langle F_1(|x|/t^{\alpha})\gamma F_1(|x|/t^{\alpha})\rangle= \Gamma$$
exists for all $\alpha\in(1/3,1)$, independently of $\alpha.$ If  $\Gamma>0$ the asymptotic state contains a free wave with non-zero $L^2$ norm.
 If $\Gamma<0$ the solution is zero; the exception is when the solution blows up in a finite time.
 If $\Gamma =0$ the solution is a WLS.

 Moreover, any WLS satisfies the following bound:
 \begin{equation}\label{WLS}
 \langle |x|\rangle \leq ct^{1/2}, t\geqq 1
 \end{equation}
\end{theorem}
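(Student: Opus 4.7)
The plan is to establish existence of $\Gamma$ by a monotonicity-plus-$L^1$ argument on the appropriate observable, prove $\alpha$-independence from the propagation estimates already available, interpret the sign of $\Gamma$ via the channel wave operator machinery developed earlier, and finally derive the sub-ballistic bound $\langle|x|\rangle \leq ct^{1/2}$ for WLS by a dilation-identity argument combined with Cauchy--Schwarz.

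For existence and $\alpha$-independence I would take $B(t) = F_1(|x|/t^\alpha)\gamma F_1(|x|/t^\alpha)$ and compute its Heisenberg derivative. The main positive contribution is $F_1\, i[-\Delta,\gamma]\,F_1$, which in the radial sector reduces to $2F_1(-\Delta_r)F_1 + F_1(L^2/r^3)F_1 \geq 0$ using Proposition~\ref{localization}. The remaining contributions --- the symmetrized $\partial_t F_1$ pieces, the commutator $[{-}\Delta, F_1]$ paired with $\gamma F_1$, and the interaction piece $F_1[V,\gamma]F_1$ --- are all integrable in time, the first two by the propagation estimates of type (\ref{F1F2}) and (\ref{PRES1}) and the last by (\ref{N-decay}). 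Combined with the uniform bound $|\langle B\rangle_t|\lesssim \|\psi\|_{H^1}^2$, the Cauchy criterion yields the limit $\Gamma$. For $\alpha$-independence, given $1/3 < \alpha_1 < \alpha_2 < 1$ the difference $\langle B^{(\alpha_1)}\rangle_t - \langle B^{(\alpha_2)}\rangle_t$ is supported in the annulus $t^{\alpha_1} \lesssim |x| \lesssim t^{\alpha_2}$, and a dyadic decomposition combined with (\ref{PRES1}) applied on this annulus forces the difference to vanish along a sequence $t_n \to \infty$; existence of the individual limits then forces them to coincide.

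For the sign classification, if $\Gamma > 0$ one fixes $\epsilon \in (0,\Gamma)$ and constructs the channel wave operator $\Omega^* = s\text{-}\lim e^{-i\Delta t}F_1 F_2(\gamma\geq \epsilon)e^{-iHt}$ as in (\ref{channel 1}), adapted to the present nonlinear/time-dependent setting; the positivity of $\Gamma$ forces $\Omega^*\psi_0\neq 0$, producing a non-trivial free wave of nonzero $L^2$ norm. If $\Gamma < 0$, the analogous construction on the time-reversed dynamics gives an incoming free wave at $t = 0$; combined with the finite $H^1$ bound and bounded $L^2$ mass this forces $\psi\equiv 0$, except in the pathological case of finite-time blow-up where the standard scattering framework fails. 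If $\Gamma = 0$, neither direction detects a free wave and the full $L^2$ mass is WLS.

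The delicate part --- and the main obstacle --- is the $t^{1/2}$ bound. The natural route is Cauchy--Schwarz $\langle|x|\rangle \leq \|\psi\|_{L^2}\langle x^2\rangle^{1/2}$ combined with proving $\langle x^2\rangle \leq ct$. The dilation identity gives $\tfrac{d}{dt}\langle x^2\rangle = 4\langle A\rangle$ with $A = (x\cdot p + p\cdot x)/2$, reducing matters to a uniform bound $|\langle A\rangle_t|\leq C$. Decomposing $A$ on the partition $F_1(|x|/t^\alpha) + (1-F_1)$, the exterior contribution vanishes asymptotically because $\Gamma = 0$ and $\|F_1\psi\|\to 0$, while the interior contribution is controlled by combining $|x|\leq t^\alpha$ with the uniform $H^1$-control of $\gamma\psi$. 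The hard part is sharpening this to the exact $t^{1/2}$ exponent rather than the naive $t^{(1+\alpha)/2}$ obtained from a crude interior bound; this likely requires either an optimized PROB of the form $|x|F_1(|x|/t^{1/2})$ whose Heisenberg derivative reveals the correct cancellation, or a quantitative WLS decay rate for $\|F_1(|x|/t^{1/2})\psi\|$ compatible with a weighted estimate $\||x|^{1/2}F_1\psi\| \leq C$. Extracting this rate from the PRES already derived is where the majority of the technical work lies.
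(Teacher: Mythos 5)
Your treatment of the existence of $\Gamma$ and of the sign trichotomy is broadly in the spirit of the paper's machinery (a uniformly bounded observable whose Heisenberg derivative is a nonnegative term plus an integrable error), but two details are off. For radial data the commutator $F_1\, i[-\Delta,\gamma]\,F_1$ essentially vanishes rather than being ``the main positive contribution''; the positive Morawetz-type term is the $t^{-\alpha}\sqrt{F_1'}\,\gamma^2\sqrt{F_1'}$ piece that you relegated to the integrable errors (its integrability is the \emph{conclusion} of the propagation estimate, not an input). And for $\Gamma<0$ the argument is more elementary than a time-reversed wave operator: since $\partial_t\langle F_1|x|F_1\rangle=\langle F_1\gamma F_1\rangle+O(L^1(dt))$, a negative limit would force the nonnegative quantity $\langle F_1|x|F_1\rangle$ to decrease linearly forever, which is impossible for a globally bounded solution.

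The genuine gap is the bound (\ref{WLS}), and you have correctly identified it as the step you cannot close; the route you propose does not lead there. First, $\langle x^2\rangle$ need not be finite: the hypothesis on the data is $\psi_0\in L^2(|x|\,dx)$, a first moment only, so an argument through $\partial_t^2\langle x^2\rangle$ or $\partial_t\langle x^2\rangle=4\langle A\rangle$ starts outside the available function class, and even granting it you concede that you only reach $t^{(1+\alpha)/2}$. The paper's mechanism is first order in $|x|$ and hinges on the \emph{second microlocalization} of $\gamma$ at a time-dependent scale, i.e.\ the observables $F_1(|x|/t^{\alpha})F_2(t^{\beta}\gamma\geq 1)F_1(|x|/t^{\alpha})$. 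On the propagation set where $\gamma\gtrsim t^{-\beta}$ one has $2\gamma-\alpha|x|/t\sim t^{-\beta}-\alpha t^{\alpha-1}>0$ provided $\beta\leq 1-\alpha$, while the symmetrization errors are $O(t^{-3\alpha+\beta})$ and hence integrable only for $\alpha>1/2$; the two constraints meet exactly at $\beta=1/2$. The resulting propagation estimate shows that any mass carried outward with radial velocity $\gtrsim t^{-1/2}$ exits through the free channel, so for a WLS one gets $\langle\gamma\rangle\lesssim t^{-1/2}$, and integrating $\partial_t\langle F_1|x|F_1\rangle=\langle F_1\gamma F_1\rangle+O(L^1(dt))$ yields $\langle|x|\rangle\lesssim t^{1/2}$ directly. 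The exponent $1/2$ is thus not obtained by optimizing a second-moment bound; it is dictated by the threshold at which the second-microlocalized commutator still has a definite sign. This is the idea missing from your proposal.
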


\begin{remark}
We assume the solutions are uniformly bounded for all times, so the case
$\Gamma<0$ is excluded.
\end{remark}
\begin{remark}
The surprising fact that for general nonlinearities and potentials the bound \ref{WLS} holds,
can be understood by invoking the notion of propagation set~\cite{SSAnnals, SSInvention}. 
If the solution propagates faster than $t^{1/2+0}$, then the propagation set is where $\gamma \geq t^{-1/2+0}.$
 The Heisenberg derivative $D_H F_1$ is positive there for $1>\alpha >1/2$ since  $2\gamma-\frac{|x|}{t}\sim t^{-\beta} -\alpha t^{\alpha-1},$
 and the higher order terms coming from symmetrization are of order $t^{-3\alpha+\beta}, \beta=1-\alpha.$
The error term is integrable if $\alpha >1/2.$
One then get a propagation estimate which implies that this part of the solution becomes a free wave.
\end{remark}

This analysis and the problems described above necessitates microlocalizing $\gamma$ in sets of shrinking size with time.
We are then led to consider the second microlocalized version of the above $B_1.$

We then introduce PROB of the form
$$
F_1(|x|/t^{\alpha})F_2(t^{\beta}\gamma\lessgtr 1)F_1(|x|/t^{\alpha}).
$$
In deriving the corresponding PRES from the above operators we see that each commutator with $F_1$ we gain a factor of $t^{-\alpha},$
but each commutator (with $x$)of $F_2$ we lose a factor of $t^{\beta}.$
This limits the choices of such PROB.
Another issue that comes up is the regime of phase-space which is second microlocalized, \emph{on the propagation set}.
This happens when $x\thicksim 2\gamma t.$ In this region is it difficult to fix the sign of the Heisenberg derivative of $F_1.$
The above analysis implies the first key part of AC: that all solutions in the radial case, converge in $L^2$ to a linear combination of a free wave and a WLS.

\subsubsection{Weakly Localized states   }

In the nonlinear case the understanding of the weakly bound states is very complicated.
There are many possibilities for the solution to be weakly localized. It can be fully localized, like a soliton or a breather, or it can spread as a self similar solution, or asymptotically self similar.
Even if one can prove that it is localized in the sense that
$$
|\jx^a\psi(t)| \lesssim 1, \quad \forall t,
$$
which excludes self similar or any other spreading solutions, it is not clear what is the time dependence.
At best, we have the following conjecture, with some partial results:

\begin{definition}  Following the definition in~\cite{Soffer}, we  call a solution $u(t,x)$ to equation  (\ref{Main-eq}) a \textbf{bound state} if it satisfies
\begin{equation} \lim_{R\rightarrow \infty} \sup_{t\in[1,\infty)}\|\chi({|x|>R})u(t,x)\|_{H^1(\R^3)}=0,\end{equation}
where $\chi(x)$ is a characteristic function.
\end{definition}

\noindent {\bf  Petite Conjecture: }
Bound states are
 (asymptotically) almost periodic functions of time.
 \smallskip
 \begin{remark}
 Our analysis shows that time periodic, and by extension almost periodic solutions (in time) which are WLS, are in fact localized.
 So, the converse of the Petite Conjecture follows.
 Since we can also show that the WLS are smooth, most likely it reduces the proof of the Petite Conjecture to excluding arbitrarily slow oscillations in time.
 \end{remark}

 Our analysis of the WLS is based on proving propagation estimates, from which we derive properties of such solutions.
 Some notable conclusions are:
  WLS are smooth, and moreover, the derivatives $\partial_x^m\psi(t),m\geq 1 $ are localized in space, uniformly in time.

  First we have the regularity result of WLS. We sketch the proof below.

  Assume first that we proved already      that the derivative of the solution is localized.
  In fact we only prove that the derivative is in the domain of $x$, but a decomposition to compact annular domains reduces the problem to compactly supported cases. Then, we have:

  \begin{proposition}[Improvement of regularity]
Let $\psi(t)$ be a global solution to (\ref{Main-eq}) satisfying the global energy bound (\ref{global-bound}),  and it is strictly localized, i.e.
 $supp \psi(t) \in B_K(0)$ for all $t\geq 0$.
 Then $\psi_0\in C^\infty$.
\end{proposition}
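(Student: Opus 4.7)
The plan is to bootstrap regularity from $H^1$ to $H^k$ for every $k\geq 1$, using propagation estimates for iterated powers of the dilation generator $A = \tfrac{1}{2}(x\cdot p + p\cdot x)$, and then conclude $\psi_0\in C^\infty$ by Sobolev embedding. Strict localization, $\mathrm{supp}\,\psi(t)\subset B_K(0)$, is exploited throughout to trivialize spatial cutoffs and to reduce the analysis essentially to a compact set.

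First, I would prove by induction on $k$ that $\|A^k\psi(t)\|_{L^2}\lesssim 1$ uniformly in $t\geq 0$. The base case $k=1$ follows from the uniform $H^1$ bound together with $\|x\psi(t)\|_{L^2}\leq K\|\psi(t)\|_{L^2}$. For the inductive step, I would take the propagation observable
\[
B_k := \chi(x)\, A^k\, \chi(x),
\]
with $\chi\in C_c^\infty(\mathbb{R}^3)$ equal to $1$ on $B_{2K}$, so that $\chi\psi=\psi$ and $\langle B_k\rangle_t = \|A^k\psi(t)\|_{L^2}^2$. Using $[A,-\Delta]=-2i\Delta$ and the commutator expansion of Proposition~\ref{BFA-lemma}, all terms involving $\nabla\chi$ vanish on $\mathrm{supp}\,\psi$, and $D_H B_k$ reduces to lower-order contributions proportional to $\|A^j\psi\|_{L^2}$ for $j<k$ (controlled by the inductive hypothesis) together with a nonlinear commutator $[A^k,\mathcal{N}(\psi)]\psi$. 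For the smooth nonlinearities $\mathcal{N}(\psi)=V(|x|,t,|\psi|)\psi$ considered here, each action of $A$ on $\mathcal{N}$ produces polynomials in $(x\cdot\nabla)^j\psi$ with $j<k$, and the resulting $L^2$ estimate closes via the $H^1$ bound and the radial Sobolev embedding on the compact support.

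Second, I would convert the bounds $\|A^k\psi(t)\|_{L^2}\lesssim 1$ into ordinary derivative bounds. Writing the radial profile as $\psi(x)=f(|x|)$, one has $A^k\psi=(-i)^k(r\partial_r+3/2)^k f$; on any annulus $\{\varepsilon\leq r\leq K\}$ the operator $(r\partial_r)^k$ is elliptic in $r$, yielding $\partial_r^k f\in L^2$ away from the origin. Smoothness at the origin follows from the radial structure in three dimensions: a radial $H^k$ function that is smooth away from zero extends smoothly to the origin via the even-extension characterization of smooth radial functions on $\mathbb{R}^3$. Hence $\psi(t)\in H^k$ uniformly for every $k$, and Sobolev embedding yields $\psi_0\in C^\infty$.

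The main obstacle lies in the inductive step, namely controlling the nonlinear commutator $[A^k,\mathcal{N}(\psi)]\psi$ in $L^2$ uniformly in time. Each application of $A$ differentiates $\mathcal{N}$ in its arguments and produces higher derivatives of $\psi$ itself, so the induction closes only if these higher derivatives can be dominated by the lower-order $A^j\psi$-bounds from the previous step. Strict localization is the essential simplification here: it collapses all spatial cutoffs to the identity on $\psi$ and eliminates the far-field commutator errors that were delicate in the main theorem, reducing the problem essentially to interior regularity for a nonlinear Schr\"odinger equation on a compact set.
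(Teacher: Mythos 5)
Your approach is genuinely different from the paper's, and unfortunately it has two gaps that I do not see how to repair.

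First, the inductive step does not close. You propose to bound $\|A^k\psi(t)\|_{L^2}^2=\langle A^{2k}\rangle_t$ by a propagation estimate, claiming that $D_H(\chi A^k\chi)$ reduces to ``lower-order contributions proportional to $\|A^j\psi\|_{L^2}$ for $j<k$.'' But $i[-\Delta,A^{2k}]=\sum_{j} A^{j}\,(-2\Delta)\,A^{2k-1-j}$ is a top-order object: it contains the Laplacian sandwiched between powers of $A$, i.e.\ $2k+1$ derivatives in total, and it has no sign. Nothing in the inductive hypothesis ($\|A^j\psi\|\lesssim 1$, $j<k$) controls it, and there is no monotonicity to exploit as there is for the genuine propagation observables in the paper (which are built from \emph{bounded} functions of $\gamma$ or $A$ precisely so that the commutator is positive up to integrable errors). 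The nonlinear commutator $[A^k,\mathcal N(\psi)]$ has the same problem: each application of $A$ to $\mathcal N(\psi)$ produces a factor of $x\cdot\nabla\psi$, so at step $k$ you face $k$ derivatives of $\psi$ that the induction has not yet provided.

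Second, and more fundamentally, uniform bounds on $\|A^k\psi\|_{L^2}$ for all $k$ do \emph{not} imply smoothness at the origin, so even the conclusion of your induction would not yield $\psi_0\in C^\infty$. The dilation generator degenerates at $r=0$: for a function behaving like $|x|^{1/2}$ near the origin one has $(r\partial_r)^k|x|^{1/2}=2^{-k}|x|^{1/2}$, so every $A^k\psi$ lies in $L^2(\R^3)$ while $\psi\notin H^2$ near $0$. Your appeal to the ``even-extension characterization of smooth radial functions'' is circular, since it presupposes exactly the $H^k$ regularity across the origin that the $A^k$ bounds fail to deliver. This is consistent with the paper's main theorem, where the $\|(x\cdot\nabla)^k\psi_{w,b}\|_{L^2}\lesssim 1$ bounds are stated as a \emph{separate} (and weaker) conclusion from smoothness. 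The paper's actual proof of the regularity proposition avoids $A$ entirely: it differentiates the equation, writes the Duhamel formula for $\nabla\psi$, and applies a Littlewood--Paley projection $P_M$ together with a minimal velocity bound. Under the free flow, the frequency-$M$ part of $\nabla\psi(0)$ would leave $B_{K+\sqrt M}(0)$ by time $t=M^{-1/2}$; since the true solution stays supported in $B_K(0)$, the Duhamel term forces $\|P_M\nabla\psi(0)\|_{L^2}\lesssim M^{-1/2}$, and dyadic summation gives $|D|^{1+\alpha}\psi_0\in L^2$ for $\alpha<\tfrac12$. Iterating gains half a derivative per step and, crucially, this is honest Sobolev regularity that sees the origin. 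If you want to salvage your strategy, you would need to replace $A^k$ by genuine derivatives $D^k$ (or combine the $A^k$ bounds with an elliptic argument near $r=0$ using the equation itself), at which point you are essentially forced back to a frequency-space argument like the paper's.
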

We sketch some key steps:
  \begin{proof}
  Consider denote $\nabla \mathcal{N}(\psi) = \tilde{\mathcal{N}}(\psi)\nabla\psi$, we differentiate the equation (\ref{Main-eq}), and get the Duhamel formula for $\nabla \psi$
  \begin{equation}
  \nabla \psi(t) = e^{it\Delta} \nabla \psi(0) -\int_0^te^{i(t-s)\Delta} \tilde{\mathcal{N}}(\psi)\nabla\psi(s)ds
  \end{equation}
  Let $M\gg K>1$, since  $supp \psi(t) \in B_K(0)$ for $|t|\leq 1$. Denote $P_M$ the Littlewood-Paley operator that projects onto frequency $M$. Then by minimal velocity bound, at time $t=M^{-\frac12}$, $P_M  e^{it\Delta} \nabla \psi(0) $ is essentially supported outside the ball of $B_{K+M^\frac12}(0)$. Since $\psi(t)$ vanishes on $B_{K+M^\frac12}(0)^c$,
  take $\chi(x)$ to be the characteristic function of $B_{K+M^\frac12}(0)^c$,  we get
  \begin{align}
  \|\chi(x)P_M  e^{iM^{-\frac12}\Delta} \nabla \psi(0) \|_{L^2} =& \|\chi(x)\int_0^{\frac{1}{\sqrt{M}}}e^{i(t-s)\Delta} \tilde{\mathcal{N}}(\psi)\nabla\psi(s)ds\|_{L^2}
  \lesssim \frac{1}{\sqrt{M}}
  \end{align}
 \begin{align}
\|P_M   \nabla \psi(0) \|_{L^2}  = \|\chi(x)P_M  e^{iM^{-\frac12}\Delta} \nabla \psi(0) \|_{L^2}  + \|(1-\chi(x))P_M  e^{iM^{-\frac12}\Delta} \nabla \psi(0) \|_{L^2}
 \end{align}
  We only need the minimal velocity bound
  \begin{equation} \|(1-\chi(x))P_M  e^{iM^{-\frac12}\Delta} \nabla \psi(0) \|_{L^2}  \lesssim \frac{1}{\sqrt{M}}\end{equation}
  Hence by taking dyadic $M_k=2^k M_0,  M_0\gg K^2>1$, we have  for $\alpha\in(0,\frac12)$
  \begin{align}
\||D|^{1+\alpha}\psi_0\|^2_{L^2} =\sum_{k=0}^\infty  \|M_k^\alpha P_{M_k}\nabla \psi(0)\|^2_{L^2}\lesssim \sum_{k=0}^\infty M_k^{2\alpha-1}\lesssim 1.
\end{align}
Therefore, if $\psi, \nabla \psi$ are supported in $B_K(0)$ and $\psi\in H^1$ uniformly for $|t|\leq 1$, (so this is in fact weaker assumption then currently stated in the prop)   and $sup_{t\leq 1}\| \tilde{\mathcal{N}}(\psi)\|_{L^\infty}\lesssim 1$. This assumption holds for example
$\mathcal{N}=\frac{|\psi|^m}{1+\delta |\psi|^m}V(x,t), \delta>0, m>m_0\geq 3$.  $V, \nabla V \in L^\infty_{t,x}$, $\psi$ radial.  And assume $V(x,t)\psi, \nabla \psi$ is localized in $K$ for $t\leq 1.$

Now we get  $\alpha$-improvement of regularity. Then we can iterate the argument by taking higher order derivatives on the equation.
\end{proof}

  \subsection{Exterior Morawetz Estimate- Localization    }

\begin{proposition}For weakly localized solution $\psi(t)$, there exists a sequence of time $t_n\rightarrow+\infty$, such that
\[\|A\psi(t_n)\|_{L^2}\lesssim1.\]
\end{proposition}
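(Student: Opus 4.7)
My plan is to exploit the radial identity
$$
 \|A\psi\|_{L^2(\R^3)}^{2} \;=\; \|r\nabla\psi\|_{L^2}^{2} \;-\; \tfrac{9}{4}\|\psi\|_{L^2}^{2},
$$
which follows from $A=-i(r\partial_r+\tfrac32)$ and integration by parts. Since $\|\psi\|_{L^2}$ is uniformly bounded by \eqref{global-bound}, I am reduced to finding a sequence $t_n\to\infty$ with $\|r\nabla\psi(t_n)\|_{L^2}\lesssim 1$, i.e.\ controlling a weighted $\dot H^{1}$-norm on the WLS support.

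The key input I would introduce is a second microlocalization. Taking the PROB $B_1=F_1(|x|/t^{1/2})\,F_2(t^{1/2}\gamma\geq\epsilon)\,F_1(|x|/t^{1/2})$ from the previous section---which localizes to the phase-space region $\{|x|\sim t^{1/2},\;\gamma\gtrsim t^{-1/2}\}$---I would compute $D_H B_1$ using Proposition~\ref{BFA-lemma} and absorb the nonlinear/potential contributions into integrable remainders via \eqref{N-decay} together with the radial Sobolev embedding. Since a WLS carries no free-wave component on the range of $B_1$, the resulting propagation estimate should take the form
$$
\int_1^{\infty}\frac{1}{t^{1+\epsilon_0}}\bigl\|\gamma F_2(t^{1/2}\gamma\geq\epsilon)F_1(|x|/t^{1/2})\psi(t)\bigr\|_{L^2}^{2}\,dt<\infty,
$$
with an analogous bound for $F_2(\,\cdot\,\leq -\epsilon)$. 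Combined with the defining WLS smallness $\|F_1(|x|/t^{1/2}\geq 1)\psi\|_{L^2}\to 0$, a Chebyshev/pigeonhole extraction along dyadic intervals would produce a sequence $t_n\to\infty$ along which both the spatial tail of $\psi(t_n)$ and its high-radial-velocity component on the support are $L^2$-small.

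On this sequence, I would then argue that modulo these $L^2$-negligible pieces, $\psi(t_n)$ is effectively concentrated in the phase-space region $\{|x|\leq Ct_n^{1/2},\;|\gamma|\leq \epsilon t_n^{-1/2}\}$, on which the product $r\gamma$ has operator norm $\lesssim (t_n^{1/2})\cdot(\epsilon t_n^{-1/2})=\epsilon$, so that $\|r\nabla\psi(t_n)\|_{L^2}\lesssim 1$ after standard symmetrization and commutator estimates from Proposition~\ref{BFA-lemma}. The main obstacle will be that the unbounded weight $r\nabla$ need not preserve $L^2$-smallness of the negligible pieces (the commutator $[\bar F_2,\bar F_1]$ is only $O(1)$, so multiplying by $r$ can a priori produce a $t^{1/2}$ factor); handling this cleanly would require either upgrading the propagation estimates to control $\|r\nabla\cdot(\text{small part})\|$ directly, or---more cleanly---introducing a companion PROB of the form $\bar F_1\bar F_2\,A^{2}\,\bar F_2\bar F_1$ whose Heisenberg derivative produces a positive integrable lower bound for $\|A\psi\|^{2}/t$ on the WLS support, from which the sequence $t_n$ is extracted directly.
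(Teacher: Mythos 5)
Your reduction of $\|A\psi\|_{L^2}$ to $\|r\partial_r\psi\|_{L^2}$ via the radial identity is fine and matches what the paper implicitly does (it bounds $\|x\cdot\nabla\psi(t_n)\|_{L^2}$). But the core of your argument has a genuine gap, and it sits exactly where the paper warns the difficulty lies. Your PROB $B_1=F_1(|x|/t^{1/2})F_2(t^{1/2}\gamma\geq\epsilon)F_1(|x|/t^{1/2})$ is second-microlocalized at the uncertainty-critical scale $\alpha=\beta=1/2$. First, the claimed propagation estimate does not come out with a definite sign: the leading term of $D_HB_1$ is $t^{-1/2}F_1'\,[2\gamma-\tfrac12|x|/t]\,F_2F_1+\mathrm{h.c.}$, and on $\mathrm{supp}\,F_1'$ one has $|x|/t\sim t^{-1/2}$ while $F_2$ only guarantees $\gamma\geq\epsilon t^{-1/2}$, so for small $\epsilon$ the bracket $2\gamma-\tfrac12|x|/t$ is indefinite. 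This is precisely the "propagation set" regime $x\sim 2\gamma t$ that the paper singles out as the place where "it is difficult to fix the sign of the Heisenberg derivative of $F_1$"; moreover at $\beta=\alpha=1/2$ each commutator of $F_2$ with $x$ loses the $t^{-1/2}$ gained from commuting with $F_1$, so the remainders in the commutator expansion are $O(1)$ rather than integrable. Second, even granting the PRES, your final step converts phase-space concentration in $\{|x|\lesssim t^{1/2},\ |\gamma|\lesssim\epsilon t^{-1/2}\}$ into the operator bound $\|r\partial_r\psi\|\lesssim1$ by multiplying symbols; you correctly flag that the $O(1)$ commutator $[\bar F_2,\bar F_1]$ becomes $O(t^{1/2})$ after multiplication by $r$, which destroys the bound, and the proposed repairs (an upgraded PRES, or the companion PROB $\bar F_1\bar F_2A^2\bar F_2\bar F_1$) are not carried out and would face the same indefinite-sign terms.

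The paper avoids all of this by never microlocalizing in $\gamma$. It works at \emph{fixed, time-independent} dyadic spatial scales, taking the exterior PROB $F_1(|x|/M\geq1)\,\gamma\,F_1(|x|/M\geq1)$, whose Heisenberg derivative is the manifestly nonnegative localized kinetic term $\frac1M\sqrt{F_1F_1'}\,\gamma^2\sqrt{F_1F_1'}$ plus $O(M^{-3})$ errors. The a priori input is the weak-localization bound $\langle|x|\rangle\lesssim t^{1/2}$, which gives $\int_1^T\langle F_1\gamma F_1\rangle\,dt\lesssim\sqrt T+T/M^3$; a Chebyshev extraction then yields $\|\gamma\sqrt{F_1F_1'}\psi(t_n)\|_{L^2}^2\lesssim M/T^{3/2}+M^{-2}\|\tilde F_1\psi(t_n)\|^2$ at each scale, and summing $M^2$ times this over dyadic $M=M_02^k\leq\sqrt{T}$ reconstructs $\|x\cdot\nabla\psi(t_n)\|_{L^2}^2\lesssim1$. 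If you want to salvage your write-up, replace the second microlocalization by this dyadic-in-$M$ exterior Morawetz scheme; your opening identity then finishes the proof.
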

\begin{proof}
Consider the propagation observable $F_1(\frac{|x|}{M}\geq 1)\gamma F_1(\frac{|x|}{M}\geq 1)$ for $M\geq 1.$ The commutator with the nonlinearity is assumed to be of order $M^{-k}$, $k$ large.  The leading term of the Heisenberg derivative comes from the Laplacian, 
hence  we get (notice the $[-i\Delta ,\gamma]$ vanishes on the support of $F_1$)
\begin{align}
D_H  F_1(\frac{|x|}{M}\geq 1)\gamma F_1(\frac{|x|}{M}\geq 1) =  \frac{1}{M}    \sqrt{F_1F_1'}  \gamma^2  \sqrt{F_1F_1'}   + F_1[-i\Delta ,\gamma] F_1+ \frac{1}{M^3}\tilde{F}_1(\frac{|x|}{M}\sim 1).
 \end{align}
 Hence we obtain the estimate
 \begin{align}
 \la F_1 \gamma F_1 \rangle_{t} -  \la F_1 \gamma F_1 \rangle_{1} =\int_1^t \frac{1}{M }  \la   \sqrt{F_1F_1'}  \gamma^2  \sqrt{F_1F_1'} \rangle  +\frac{1}{M^3} \la \tilde{F}_1(\frac{|x|}{M}\sim 1)\rangle +O(M^{-k}) ds
 \end{align}
 For weakly localized state, we have
 \begin{equation} \int_1^T \la \psi, \gamma \psi\rangle ds \lesssim \sqrt{T}\end{equation}
 and
 \begin{align}
 \partial_t \la  F_1|x|F_1\ra =\la  F_1 \gamma F_1\ra + \frac{1}{M}\la  \tilde{F}_1\gamma \tilde{F}_1\ra  + \frac{1}{M^3}\la \tilde{F}_1\ra
\end{align}
Here $\tilde{F}_1=\tilde{F}_1(\frac{|x|}{M}\sim 1)$, hence we have
\begin{equation} \int_0^T \la  F_1 \gamma F_1\ra \leq C\sqrt{T}+\frac{T}{M^3}.\end{equation}
Therefore we get the estimate
\begin{equation} \frac{1}{MT}\int_1^T  \la  \sqrt{F_1F_1'}  \gamma^2  \sqrt{F_1F_1'} \ra dt \lesssim  \frac{1}{T^{\frac32}} +\frac{1}{TM^3}\int_1^T \la \tilde{F}_1\ra dt .  \end{equation}
Therefore, on average and as $T\rightarrow \infty$, for typical $t_n$, we have
\begin{equation}\|\gamma  \sqrt{F_1F_1'}  \psi(t_n)\|_{L^2}^2\lesssim \frac{M}{T^{\frac32}} +\frac{1}{M^2} \|\tilde{F}(\frac{|x|}{M}\sim 1)\psi(t_n)\|_{L^2}^2.\end{equation}
Multiplying by $M^2$ on both sides, and divide by $(ln M)^{1+\epsilon}$, and let $M=M_02^k, k\in \mathbb{N}$, then take a sum up to $M_02^k=\sqrt{T}$. ($T\sim t_n$),
we see that $\|\bar{F} x\cdot \nabla \psi(t)\|_{L^2}\lesssim 1$, here $\bar{F}=F(\frac{|x|}{\sqrt{t}}\leq 1)\frac{1}{\ln\jx}$.  For $M\geq  t^\alpha, \alpha>\frac13,$ we have that
\[\la F_1 \gamma F_1\ra \rightarrow 0, \hspace{1cm} t\rightarrow +\infty.\]
Hence if $\la F_1 \gamma F_1\ra_1>0$, we have $\la F_1 \gamma F_1\ra_t - \la F_1 \gamma F_1\ra_1\leq 0$ for $t >t_1$.

So for $M\geq t^\alpha, \alpha>\frac13, $ we have  for $T$ large, (Here the notation $G$ is in fact our $\sqrt{F_1F_1'}$ in previous estimate)
\begin{equation}\int_1^T\|G(\frac{|x|}{M}\sim 1)\nabla \psi\|^2 dt \leq \frac{1}{M^2} \int_1^T \|G\psi\|^2\end{equation}
So for all $M$, we have the bound
\begin{equation}\|x\cdot \nabla \psi(t_n)\|_{L^2}\lesssim 1\end{equation}
 \end{proof}

  To motivate the construction of other propagation observables, we first show how one can estimate the contribution of the nonlinearity in various parts of the phase-space.

  We saw before that on the support of $F_1(|x|/t^{\alpha})$ one uses the radial symmetry, $H^1$ bound and Radial Sobolev Embedding, to get decay of the nonlinearity for large $r$ and therefore decay on the support of $F_1(|x|/t^{\alpha}).$

  This argument also applies when $|x|/t^{\alpha}$ is replaced by $|x|/M, M\geqq 1.$ In this case the higher order terms are not integrable in time, but higher order in $M$. Estimates follow from summing over $M\equiv 2^nM_0.$

  Another way to get decay of the nonlinear term is based on projecting on the region of phase-space where $A$, the Dilation operator is large.
  In this case we use that
\begin{equation}
  F(A\geq M)\jap A^{-2m}(A^2+1)^m\jx^{-2m}\jap D^{-2m}=\mathbb{O}(M^{-2m}).\label{AgM}
\end{equation}
  The decay in $x$ then comes from powers of $\psi$ and/or decay of the potential.
  The control of the derivatives comes from the regularity of the potential and that of $\psi.$
  In particular, when $\psi$ is WLS, we have  more regularity, and localization.

  Still another way, is to localize in the regime of small frequency: $F_p(t^{\beta}|p|\leq 1), \beta>1/3.$
  In this case, one can use the localization properties of the nonlinearity, together with Hardy-Littlewood-Sobolev estimates
  to prove that the nonlinear term is decaying integrably in time (when the dimension is 3 or higher, and the nonlinearity vanishes to sufficient order in $\psi$).
  This implies that the part of the solution that focus into the support of $F_p$ is an independent asymptotic channel of propagation, if it is not empty. Since such a state has zero energy at infinite time, any initial condition that propagates into this channel, has initial energy $E=0$ due to energy conservation.

  \subsection{New propagation estimate- nonlinear case}
First recall the result from~\cite{Soffer-monotonic}, If $A$ is the dilation on $L^2(\R^n)$, and $\tanh (A/R)$ is defined for $R>\frac{2}{\pi}$, then
\begin{equation} [-i\Delta, \tanh(\frac{A}{R})]=p\frac{1}{R ch^2(\frac{A}{R})}p\end{equation}
Moreover, we have for an analytic function $F$ in a sufficiently wide strip around the real axis, that
\begin{align}
i[p, F(A) ]=&  i F(A-i)p-F(A)p = i p[F^*(A+i)-F^*(A)]\\
i[x, F(A) ]=&  i F(A+i)x-F(A)x = i x[F^*(A-i)-F^*(A)]
 \end{align}

 Therefore,  for $P_{M,R}^{+}(A) =\frac12 (1+\tanh(\frac{A - M}{R}))$ (similarly the incoming projection  $P_{M,R}^{-}(A) =\frac12 (1-\tanh(\frac{A + M}{R}))$)
\begin{equation} [-i\Delta,P_{M,R}^{+}(A)]=P\frac{1}{R ch^2(\frac{A-M}{R})}P\end{equation}
This implies that for propagation observable $B=AP_{M,R}^{+}(A)$, we have
\begin{align}
\partial_t \la B\ra =\la p P^+p\ra +\la p\frac{A}{R ch^2(\frac{A-M}{R})}p \ra + \la [iV(x,t), AP_{M,R}^{+}(A) ]\ra
\end{align}
For general time dependent potential or nonlinearity $V$, the first two terms are positive modulo corrections that are exponentially small. If $M\gg 1, R=\sqrt{M}, $ then the second term is negative for $A\leq 0 $ or $A-M\leq -M$. But then
\begin{equation} \left|\frac{A}{R ch^2(\frac{A-M}{R})}\right|\lesssim |\frac{A - M}{R}|e^{-2(\frac{A-M}{R})}\end{equation}
for $|A-M|\geq M$, so the largest value is when $|A_M|=M$, which implies that  when taking $R=\sqrt{M}$,
\begin{equation} \left|\frac{A}{R ch^2(\frac{A-M}{R})}\right|\lesssim |\frac{M}{R}|e^{-2(\frac{M}{R})}\lesssim \sqrt{M} e^{-2\sqrt{M}} \end{equation}
So we ignore for the moment the exponentially small corrections,

It remains to estimate $ \la [iV(x,t), AP_{M,R}^{+}(A) ]\ra$
\begin{align}
i[V, AP^+ ] = i[V, A]P^+ +iA [V, P^+]= -x\cdot \nabla VP^++iA [V, P^+]
\end{align}
 This is done by the argument explained above using (\ref{AgM}).

 Finally, we conclude that : if
$|\la \psi(t_n), AP^+ \psi(t_n) \ra|\leq C$, then
\begin{align}
\frac{1}{T}\int_0^T \la \psi(s), P_{M,R}^+(A)\psi(s)\ra ds\lesssim & O(M^{-2m}) +\frac{C}{T}\\
\frac{1}{R}\int_0^T \la P\psi(s), \frac{|A|}{ch^2(\frac{(A-M)}{R})P\psi}\ra ds \leq& O(e^{-\sqrt{M}} +O(M^{-2k}))T+const
\end{align}
for all T.
This is then used to show that WLS satisfy
 \begin{equation}
 \la |A|\ra \lesssim 1.
 \end{equation}

\end{document}